\newtheorem{thm}{Theorem}[section]
\newtheorem{cor}[thm]{Corollary}
\numberwithin{equation}{section}
\begin{document}

{\it To appear in Revista de la Uni$\acute{o}$n Matem$\acute{a}$tica Argentina}

\title{\bf Geometric inequalities for Einstein totally real
submanifolds in a complex space form}
\author{Pan Zhang, Liang Zhang and Mukut Mani Tripathi}

\thanks{{\scriptsize
\hskip -0.4 true cm \textit{2010 Mathematics Subject Classification:}
53C40; 53C42.
\newline \textit{Key words and phrases:} Inequalities; Einstein;
Totally real submanifolds; Complex space form}}

\maketitle

\begin{abstract}
Two geometric inequalities are established for Einstein totally real
submanifolds in a complex space form. As immediate applications of
these inequalities, some non-existence results are obtained.
\end{abstract}

\vskip 0.2 true cm


\pagestyle{myheadings}
\markboth{\rightline {\scriptsize P. Zhang, L. Zhang and M.M. Tripathi}}
         {\leftline{\scriptsize Geometric inequalities for Einstein totally
         real submanifolds in a complex space form}}

\bigskip
\bigskip


\section{ Introduction}

According to Chen's cornerstone work \cite{BYC2}, the following
problem is fundamental: {\it to establish simple relationships
between the main intrinsic invariants and the main extrinsic
invariants of Riemannian submanifolds}. The basic relationships
discovered until now are inequalities and the study of this topic has
attracted a lot of attention during the last two decades. Roughly
speaking, there are three main aspects of the study of this topic,
one looking at the new Riemannian invariants introduced by Chen
\cite{BYC3,BYC4,BYC5,BYC7,GKKT,AM,CA,CM,MM1,MM2,PLW}, the other looking at the
DDVV inequalities \cite{DDVV,JZ,LMW,LZ,IM}, and the last looking at
the Casorati curvatures \cite{DS1,JW1,JW2,VBGE,PL}. In this paper, we
are interested in obtaining characterizations of the relationships by
Chen's invariants.

Let $M$ be a Riemannian $n$-manifold and $p$ a point in $M$. Suppose
that $K(\pi)$ is the sectional curvature of $M$ with respect to a
plane section $\pi\subset T_pM$. For each unit tangent vector $X$ of
$M$ at $p$, the Ricci curvature $\textrm{Ric}(X)$ is defined by
\[
\textrm{Ric}(X) = \sum_{j=2}^nK(X\wedge e_j),
\]
where $\{e_1,e_2,\cdots,e_n\}$ is an orthonormal basis of $T_pM$ with
$e_1=X$.

In general, an $n$-dimensional manifold $M$ whose Ricci tensor has an
eigenvalue of multiplicity at least $n-1$ is called quasi-Einstein.
For instance, the Robertson--Walker spacetimes are quasi-Einstein
manifolds. Further, we say that $M$ is an Einstein manifold if
$\textrm{Ric}(X)$ is independent of the choice of the unit vector
$X$. Then for any unit tangent vector $X$ of $M$ at $p$, one has
\[
\textrm{Ric}(X)=\frac{2}{n}\,\tau(p),
\]
where $\tau(p)$ is the scalar curvature at $p$ defined by
\[
\tau(p)=\sum_{1\leq i<j\leq n}K(e_i\wedge e_j).
\]

For a given point $p$ in $M$, let $\pi_1,\cdots,\pi_q$ be $q$
mutually orthogonal plane sections in $T_pM$, where $q$ is a positive
integer $\leq\frac{n}{2}$. Following \cite{BYC3}, we define
\[
K_q^{\textmd{inf}}(p)=\inf_{\pi_1\perp\cdots\perp
\pi_q}\frac{K(\pi_1)+\cdots+K(\pi_q)}{q},
\]
where $\pi_1,\cdots,\pi_q$ run over all mutually orthogonal $q$ plane
sections in $T_pM$. For each positive integer $q\leq\frac{n}{2}$,
define the invariant $\delta_q^{\textrm{Ric}}$ on $M$ by
\[
\delta_q^{\textrm{Ric}}=\sup_{X\in
T_p^1M}\textrm{Ric}(X)-\frac{2q}{n}K_q^{\inf}(p),
\]
where $X$ runs over all unit vectors in $T_{p}^{1}M := \{ X \in
T_{p}M : \Vert X \Vert = 1 \}$.

In \cite{BYC3}, Chen established two inequalities in terms of the
Riemannian invariant $\delta_q^{\textrm{Ric}}$ for Einstein
submanifolds in a real space form. As a natural prolongation, in this
paper, we obtain two inequalities for Einstein totally real
submanifolds in a complex space form. Unlike \cite{BYC3}, we do not
need the algebraic lemma from \cite{BYC4}. Our algebraic techniques
also provide new approaches to establish inequalities obtained in
\cite{BYC3}.


\vskip 1 true cm

\section{ Preliminaries}

Let $N^m$ be a complex $m$-dimensional K\"{a}hler manifold, i.e.
$N^m$ is endowed with an almost complex structure $J$ and with a
$J$-Hermitian metric $\widetilde{g}$. By a complex space form
$N^m(4c)$ we mean an $m$-dimensional K\"{a}hler manifold with
constant holomorphic sectional curvature $4c$. A complete simply
connected complex space form $N^m(4c)$ is holomorphically isometric
to the complex Euclidean $m$-plane $\mathbb{C}^m$, the complex
projective $m$-space $\mathbb{C}P^m(4c)$, or a complex hyperbolic
$m$-space $\mathbb{C}H^m(4c)$ according to $c = 0, c > 0$ or $c < 0$,
respectively. Denote by $\widetilde{\nabla}$ its Levi-Civita
connection. The Riemannian curvature tensor field $\widetilde{R}$
with respect to $\widetilde{\nabla}$ has the expression
\begin{align}
\widetilde{R}(\widetilde{X},\widetilde{Y},\widetilde{Z},\widetilde{W})&=c\big(\langle \widetilde{X},\widetilde{Z}\rangle
\langle\widetilde{Y},\widetilde{W}\rangle-\langle\widetilde{X},\widetilde{W}\rangle
\langle\widetilde{Y},\widetilde{Z}\rangle+\langle J\widetilde{X},\widetilde{Z}\rangle \langle J\widetilde{Y},\widetilde{W}\rangle\notag\\
&\quad-\langle J\widetilde{X},\widetilde{W}\rangle \langle J\widetilde{Y},\widetilde{Z}\rangle+2\langle \widetilde{X},J\widetilde{Y}\rangle \langle \widetilde{Z},J\widetilde{W}\rangle\big),\notag
\end{align}
for any vector fields $\widetilde{X}$, $\widetilde{Y}$,
$\widetilde{Z}$, $\widetilde{W}$ on $N^m(4c)$.

Let $M$ be a totally real submanifold in $N^m(4c)$. According to the
behavior of the tangent spaces under the action of $J$, a submanifold
$M$ in $N^{m}(4c)$ is called totally real if the complex structure
$J$ of $N^{m}(4c)$ carries each tangent space $T_{p}M$ of $M$ into
its corresponding normal space $T^{\perp}_{p}M$ \cite{BYC6}. We
denote the Levi-Civita connection of $M$ by $\nabla$ and by $R$ the
curvature tensor on $M$ with respect to $\nabla$.

The formulas of Gauss and Weingarten are given respectively by
\[
\widetilde{\nabla}_XY=\nabla_XY+h(X,Y),\quad
\widetilde{\nabla}_X\xi=-A_{\xi}X+\nabla^{\perp}_X\xi,
\]
for tangent vector fields $X$ and $Y$ and normal vector field $\xi$,
where $\nabla^{\perp}$ is the normal connection and $A$ is the shape
operator. The second fundamental form $h$ is related to $A_{\xi}$ by
\[
\langle h(X,Y),\xi\rangle=\langle A_{\xi}X,Y\rangle.
\]
The mean curvature vector $\overrightarrow{H}$ of $M$ is defined by
\[
\overrightarrow{H} = \frac{1}{n}\verb"trace" \ h,
\]
and we set $H = \Vert \overrightarrow{H} \Vert$ for convenience.

A submanifold $M$ is called pseudo-umbilical if $\overrightarrow{H}$
is nonzero and the shape operator $A_{\overrightarrow{H}}$ at
$\overrightarrow{H}$ is proportional to the identity map. If
$\overrightarrow{H}=0$, we say $M$ is minimal. Besides, $M$ is called
totally geodesic if $h=0$.

For totally real submanifolds, we have \cite{BYC6}
\[
\nabla^{\perp}_XJY=J\nabla_XY,\quad A_{JX}Y=-Jh(X,Y)=A_{JY}X.
\]
The above formulas immediately imply that $\langle h(X,Y),JZ\rangle$
is totally symmetric. Moreover, the Gauss equation is given by
\cite{BYC6}
\begin{align}
R(X,Y,Z,W)&=c\big(\langle X,Z\rangle \langle Y,W\rangle-\langle X,W\rangle \langle Y,Z\rangle\big)\notag\\
&\quad \quad\quad\quad\quad+\langle h(X,Z),h(Y,W)\rangle-\langle
h(X,W),h(Y,Z)\rangle\notag
\end{align}
for all vector fields $X$, $Y$, $Z$, $W$ on $M$,

Choosing a local frame
$$
e_1, \ldots, e_n, e_{n+1},\ldots,e_{m},
$$
$$
e_{m+1}=J(e_1),\ldots, e_{m+n}=J(e_n),
e_{m+n+1}=J(e_{n+1}),\ldots,e_{2m}=J(e_{m})
$$
in $N^m(4c)$ in such a way that, restricted to $M$, $e_{1}, e_{2},
\ldots, e_{n}$ are tangent to $M$. With respect to the local frame of
$N^m(4c)$ chosen above, we denote the coefficients of the second
fundamental form $h$ by $\{h_{ij}^r\}, 1\leq i<j\leq n; n+1\leq r\leq
2m$.

\vskip 1 true cm

\section{The first inequality}

\begin{thm} \label{thm.1}
For any integer $k\geq2$, let $M$ be a $2k$-dimensional Einstein
totally real submanifold of an $m$-dimensional complex space form
$N^{m}(4c)$ of constant holomorphic sectional curvature $4c$. Then we
have
\begin{equation}\label{eq.3.1}
\delta_k^{{\rm Ric}}\leq2(k-1)(c+H^2).
\end{equation}
The equality case of \eqref{eq.3.1} holds if and only if one of the
following two cases occurs\/{\rm :}

\verb"(i)" $M$ is a minimal and Einstein totally real submanifold,
such that, with respect to suitable orthonormal frames
$\{e_1,\ldots,e_{2k},e_{2k+1},\ldots,e_{2m}\}$, the shape operators
of $M$ take the following form:
  $$A_{r}=\left(
  \begin{array}{ccccc}
   A_{1}^r&\cdots&0\\
   \vdots&\ddots&\vdots\\
   0&\cdots&A_{k}^r\\
  \end{array}
\right), \quad r=2k+1,\ldots,2m,
$$
where $A_i^r,i=1,\ldots,k$, are symmetric $2\times2$ submatrices
satisfying $\verb"trace"(A_1^r)=\cdots=\verb"trace"(A_k^r)=0$.

\verb"(ii)"  $M$ is a pseudo-umbilical and Einstein totally real
submanifold, such that, with respect to suitable orthonormal frames
$\{e_1,\ldots,e_{2k},e_{2k+1},\ldots,e_{2m}\}$, the shape operators
of $M$ take the following form:
  $$A_{r}=\left(
  \begin{array}{ccccc}
   A_{1}^r&\cdots&0\\
   \vdots&\ddots&\vdots\\
   0&\cdots&A_{k}^r\\
  \end{array}
\right), \quad r=2k+2,\ldots,2m,
$$
where $A_i^r$, $i=1,\ldots,k$, are symmetric $2\times2$ submatrices
satisfying $\verb"trace"(A_1^r)=\cdots=\verb"trace"(A_k^r)=0$.
\end{thm}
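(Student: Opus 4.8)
The plan is to push everything through the Gauss equation and reduce the theorem to an elementary inequality for symmetric matrices, thereby bypassing the algebraic lemma of \cite{BYC4}. Writing $n=2k$ and using the given frame with $e_1,\dots,e_{2k}$ tangent and $e_{2k+1},\dots,e_{2m}$ normal, I would first record that the Gauss equation gives, for an orthonormal tangent frame,
\[
K(e_i\wedge e_j)=c+\sum_{r=2k+1}^{2m}\bigl(h_{ii}^r h_{jj}^r-(h_{ij}^r)^2\bigr).
\]
Since $M$ is Einstein and $n=2k$, the relation $\mathrm{Ric}(X)=\tfrac{2}{n}\tau=\tfrac1k\tau$ holds for every unit $X$, so $\sup_X\mathrm{Ric}(X)=\tfrac1k\tau$; moreover $\tfrac{2q}{n}=\tfrac{2k}{2k}=1$, so the invariant collapses to $\delta_k^{\mathrm{Ric}}=\tfrac1k\tau-K_k^{\inf}(p)$.

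Because any $k$ mutually orthogonal $2$-planes span all of $T_pM$, every configuration competing in $K_k^{\inf}$ is realized by an orthonormal frame grouped into consecutive pairs $\pi_a=\mathrm{span}(e_{2a-1},e_{2a})$, and $\tau-\sum_{a=1}^kK(\pi_a)$ is the sum of $K(e_i\wedge e_j)$ over the pairs not lying in a common block. Passing to the infimum and using that $\tau$ is frame-independent gives
\[
\delta_k^{\mathrm{Ric}}=\frac1k\,\sup_{\{e_i\}}\ \sum_{\substack{i<j\\ e_i,e_j\text{ in distinct blocks}}}K(e_i\wedge e_j).
\]
As there are exactly $\binom{2k}{2}-k=2k(k-1)$ such off-block pairs, the constant $c$ contributes precisely $2k(k-1)c$, so it suffices to prove, for every frame, the single-configuration estimate $\sum_r S_r\le 2k(k-1)H^2$, where $S_r=\sum_{\substack{i<j\\ e_i,e_j\text{ in distinct blocks}}}(h_{ii}^rh_{jj}^r-(h_{ij}^r)^2)$.

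The algebraic heart, which I expect to be short, is a bound for each fixed normal index $r$. Setting $c_a^r=h_{2a-1,2a-1}^r+h_{2a,2a}^r$, so that $\sum_a c_a^r=\mathrm{trace}(A_r)=2kH^r$ with $H^r=\langle\overrightarrow{H},e_r\rangle$, a direct expansion and completion of squares yield
\[
S_r=\tfrac12\Bigl(\sum_a c_a^r\Bigr)^{2}-\tfrac12\sum_a (c_a^r)^2-\Bigl(\sum_{i<j}(h_{ij}^r)^2-\sum_a (h_{2a-1,2a}^r)^2\Bigr).
\]
Dropping the nonnegative off-block squared entries and applying the Cauchy--Schwarz inequality $(\sum_a c_a^r)^2\le k\sum_a (c_a^r)^2$ gives $S_r\le\tfrac{k-1}{2k}(\sum_a c_a^r)^2=2k(k-1)(H^r)^2$; summing over $r$ and using $\sum_r(H^r)^2=H^2$ proves $\sum_r S_r\le 2k(k-1)H^2$, hence \eqref{eq.3.1}.

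For the equality discussion I would trace back the two inequalities at an optimal frame. Equality in Cauchy--Schwarz forces all block traces equal, $c_a^r=2H^r$, and vanishing of the dropped terms forces $h_{ij}^r=0$ whenever $e_i,e_j$ lie in different blocks; thus every shape operator is block-diagonal with $2\times2$ blocks $A_a^r$ of trace $2H^r$. If $H=0$ this is exactly case (i) and $M$ is minimal. If $H\neq0$, choose $e_{2k+1}=\overrightarrow{H}/H$, so $H^r=0$ for $r\ge 2k+2$ and those blocks become traceless, while $A_a^{2k+1}$ has trace $2H$. Here the Einstein hypothesis does the remaining work: imposing $\mathrm{Ric}(e_{2a-1})=\mathrm{Ric}(e_{2a})$ and $\mathrm{Ric}(e_{2a-1},e_{2a})=0$ and simplifying with the block-diagonal form forces $h^{2k+1}_{2a-1,2a-1}=h^{2k+1}_{2a,2a}=H$ and $h^{2k+1}_{2a-1,2a}=0$, i.e. $A_{2k+1}=H\,I$, so $M$ is pseudo-umbilical and case (ii) holds; the converse is checked by substituting back into the single-configuration estimate. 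I expect this last step---using the Einstein condition to upgrade the bare block structure to genuine pseudo-umbilicity in the non-minimal case---to be the main obstacle, since the inequality itself is routine once the bound for $S_r$ is in hand.
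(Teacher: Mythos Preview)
Your argument is correct and follows essentially the same route as the paper: both compute $k\delta_k^{\mathrm{Ric}}=\tau-\sum_aK(\pi_a)$ via Gauss, drop the off-block terms $(h_{ij}^r)^2$, rewrite the diagonal part as $\tfrac12\bigl[(\sum_a c_a^r)^2-\sum_a(c_a^r)^2\bigr]$, and finish with the Cauchy--Schwarz inequality $\sum_a(c_a^r)^2\ge\tfrac1k(\sum_a c_a^r)^2$. Your equality discussion is in fact more explicit than the paper's (which simply cites \cite{BYC3} for the step from the block-diagonal form to pseudo-umbilicity), and your sketch of how the Einstein condition forces $A_{2k+1}=H\cdot I$ does go through as written.
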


\begin{proof}
For a given point $p$ in $M$, let $\pi_1,\ldots,\pi_k$ be $k$
mutually orthogonal plane sections at $p$. We choose an orthonormal
basis $\{e_1,\ldots,e_{2k}\}$ of $T_pM$ such that
\[
\pi_1 = \textmd{Span}\{e_1,e_2\},
\ldots,\pi_k=\textmd{Span}\{e_{2k-1},e_{2k}\}.
\]
Since $M$ is a $2k$-dimensional Einstein manifold, we have
$\tau=k\textrm{Ric}(X)$. From the definition of
$\delta_k^{\textrm{Ric}}$ and the equation of Gauss, we have
\begin{align} \label{eq.3.2}
k\delta_k^{\textrm{Ric}}&=\tau-[K(\pi_1)+K(\pi_2)+\cdots+K(\pi_k)]\notag\\
&=k(2k-1)c+\sum_{r}\sum_{1\leq i<j\leq 2k}[h_{ii}^rh_{jj}^r-(h_{ij}^r)^2]-\Big\{c+\sum_{r}[h_{11}^rh_{22}^r-(h_{12}^r)^2]\notag\\
&\quad+\cdots+c+\sum_{r}[h_{2k-1,2k-1}^rh_{2k,2k}^r-(h_{2k-1,2k}^r)^2]\Big\}\\
&\leq2k(k-1)c+\sum_{r}[\sum_{1\leq i<j\leq 2k}h_{ii}^rh_{jj}^r-(h_{11}^rh_{22}^r+\cdots+h_{2k-1,2k-1}^rh_{2k,2k}^r)]\notag\\
&=2k(k-1)c+\frac{1}{2}\sum_{r}\big\{(\sum_{i=1}^{2k}h_{ii}^r)^2-[(h_{11}^r+h_{22}^r)^2+\cdots+(h_{2k-1,2k-1}^r+h_{2k,2k}^r)^2]\big\}\notag
\end{align}
Using the Cauchy inequality, we obtain that
\begin{equation} \label{eq.3.3}
(h_{11}^r+h_{22}^r)^2 + \cdots+ (h_{2k-1,2k-1}^r + h_{2k,2k}^r)^2
\geq \frac{1}{k}(\sum_{i=1}^{2k}h_{ii}^r)^2,
\end{equation}
with the equality case of \eqref{eq.3.3} holds if and only if
\begin{equation} \notag
h_{11}^r+h_{22}^r=\cdots=h_{2k-1,2k-1}^r+h_{2k,2k}^r.
\end{equation}
Plunging \eqref{eq.3.3} into \eqref{eq.3.2}, we have
\begin{align}
k\delta_k^{\textrm{Ric}}& \leq 2k(k-1)c + \frac{1}{2}\sum_{r}
\big\{(\sum_{i=1}^{2k}h_{ii}^r)^2
-\frac{1}{k}(\sum_{i=1}^{2k}h_{ii}^r)^2\big\}\notag\\
&=2k(k-1)c+\frac{k-1}{2k}(\sum_{i=1}^{2k}h_{ii}^r)^2\notag\\
&=2k(k-1)c+\frac{k-1}{2k} 4k^2 H^2\notag\\
&=2k(k-1)(c+H^2),\notag
\end{align}
which implies
\[
\delta_k^{\textrm{Ric}}\leq2(k-1)(c+H^2).
\]

Next, we will discuss the equality case. The equality case of
\eqref{eq.3.1} at a point $p\in M$ holds if and only if we have the
equality in \eqref{eq.3.2} and \eqref{eq.3.3}, i.e. with respect to
suitable orthonormal frames, the shape operators take the following
form:
\[
A_{r}=\left(
  \begin{array}{ccccccc}
    A_1^r & \cdots &0\\
  \vdots &\ddots & \vdots\\
    0 & \cdots & A_k^r \\
  \end{array}
\right), \quad r=2k+1,\ldots,2m,
\]
where $A_i^r,i=1,\cdots,k$, are symmetric $2\times2$ submatrices
satisfying
\[
\verb"trace"(A_1^r)=\cdots=\verb"trace"(A_k^r).
\]
The rest of the discussion is similar to that of the proof of
Theorem~1 in \cite{BYC3}.

\end{proof}

\vskip 1 true cm

\section{The second inequality}

\begin{thm} \label{thm.2}
Let $M$ be an $n$-dimensional Einstein totally real submanifold of an
$m$-dimensional complex space form $N^{m}(4c)$. Then for every
positive integer $q<\frac{n}{2}$, we have
\begin{equation} \label{eq.4.1}
\delta_q^{{\rm Ric}}\leq \left( n-1-\frac{2q}{n}\right) c +
\frac{n(n-q-1)}{n-q}H^2.
\end{equation}
The equality case of \eqref{eq.4.1} holds if and only if $M$ is a
totally geodesic submanifold.
\end{thm}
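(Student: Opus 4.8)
The plan is to mirror the structure of the proof of Theorem~\ref{thm.1}, but with a more economical splitting of the sectional curvatures. Fix a point $p\in M$ and choose $q$ mutually orthogonal plane sections $\pi_1,\ldots,\pi_q$, with an adapted orthonormal basis $\{e_1,\ldots,e_n\}$ of $T_pM$ so that $\pi_a=\mathrm{Span}\{e_{2a-1},e_{2a}\}$ for $a=1,\ldots,q$. Because $M$ is Einstein of dimension $n$, we have $\tau=\tfrac{n}{2}\,\mathrm{Ric}(X)$, hence $q\,\delta_q^{\mathrm{Ric}}=\tfrac{2q}{n}\tau-\bigl[K(\pi_1)+\cdots+K(\pi_q)\bigr]$ after using the definition of $\delta_q^{\mathrm{Ric}}$. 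First I would substitute the Gauss equation to express both $\tau$ and each $K(\pi_a)$ in terms of the curvature constant $c$ and the second fundamental form coefficients $h_{ij}^r$, collecting the ambient terms into the constant $\left(n-1-\tfrac{2q}{n}\right)c$ that appears on the right of \eqref{eq.4.1}.

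The remaining extrinsic part is a quadratic expression in the $h_{ij}^r$, and the heart of the argument is to bound it by $\tfrac{n(n-q-1)}{n-q}H^2$. The key algebraic step is a Cauchy--Schwarz estimate adapted to the new coefficient $\tfrac{n(n-q-1)}{n-q}$: writing $t_r=\sum_{i=1}^n h_{ii}^r$ so that $\sum_r t_r^2=n^2H^2$, I would isolate the sum $\sum_a(h_{2a-1,2a-1}^r+h_{2a,2a}^r)$ over the chosen planes against the total trace $t_r$ and apply the inequality between the sum of $q$ squares (from the planes) together with the leftover diagonal terms and their aggregate. Concretely, the off-diagonal squares $(h_{ij}^r)^2$ are discarded (they only help the inequality), and one reduces to estimating $\sum_r\bigl[\tfrac12 t_r^2-\tfrac12\sum_{a}(h_{2a-1,2a-1}^r+h_{2a,2a}^r)^2-\cdots\bigr]$ from above. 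Since the bound must be a \emph{strict} consequence matching the stated constant, I expect the decisive inequality to be a weighted Cauchy--Schwarz of the form $\sum_i (c_i)^2\ge \tfrac{1}{N}\bigl(\sum_i c_i\bigr)^2$ applied over a carefully chosen grouping of the $n$ diagonal entries into $n-q$ effective blocks, which produces exactly the factor $\tfrac{n(n-q-1)}{n-q}$.

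The equality analysis is where this theorem diverges sharply from Theorem~\ref{thm.1}: there equality allowed nonzero (trace-free block) shape operators, but here equality forces $M$ to be \emph{totally geodesic}. I would track equality through every estimate: equality in the discarded off-diagonal terms forces all $h_{ij}^r=0$ for $i\neq j$, and equality in the weighted Cauchy--Schwarz forces all the relevant diagonal entries to coincide across blocks. Combined with the Einstein condition and the constraint $q<\tfrac{n}{2}$ (which guarantees at least one ``free'' direction outside the chosen planes), these equalities should propagate to force every $h_{ij}^r=0$, i.e.\ $h\equiv0$. The main obstacle will be verifying that the chain of equality conditions genuinely collapses \emph{all} components of $h$ rather than merely the block-diagonal ones; the strict inequality $q<\tfrac{n}{2}$ is presumably what rules out the minimal and pseudo-umbilical escape routes seen in Theorem~\ref{thm.1}, and I would pay particular attention to how the excess direction $e_{2q+1},\ldots,e_n$ forces the trace terms to vanish rather than merely balance.
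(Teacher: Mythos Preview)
Your plan is essentially the paper's proof. The paper also fixes the infimum-achieving planes, expands via Gauss, drops the off-diagonal squares $(h_{ij}^r)^2$, and then applies exactly the Cauchy--Schwarz step you anticipate: grouping the $n$ diagonal entries into the $n-q$ ``blocks'' $(h_{11}^r+h_{22}^r),\ldots,(h_{2q-1,2q-1}^r+h_{2q,2q}^r),h_{2q+1,2q+1}^r,\ldots,h_{nn}^r$ and using $\sum c_i^2\ge\frac{1}{n-q}\bigl(\sum c_i\bigr)^2$ to produce the factor $\frac{n(n-q-1)}{n-q}$. The only cosmetic difference is that the paper first splits $n\delta_q^{\mathrm{Ric}}=\mathrm{I}+\mathrm{II}$ with $\mathrm{I}=2q\,\mathrm{Ric}(X)-2\sum_a K(\pi_a)$ and $\mathrm{II}=(n-2q)\,\mathrm{Ric}(X)$, using the Einstein condition to rewrite each piece as a sum of Ricci curvatures of individual basis vectors before applying Gauss; your route through $\tau$ is a hair more direct and lands on the same quadratic expression.

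One slip to fix: your normalization $q\,\delta_q^{\mathrm{Ric}}=\tfrac{2q}{n}\tau-\sum_a K(\pi_a)$ is off. From $\mathrm{Ric}(X)=\tfrac{2}{n}\tau$ and the definition one gets $n\,\delta_q^{\mathrm{Ric}}=2\tau-2\sum_a K(\pi_a)$ (equivalently $\tfrac{n}{2}\,\delta_q^{\mathrm{Ric}}=\tau-\sum_a K(\pi_a)$); with the correct constant the ambient term comes out to $(n^2-n-2q)c$ as required. Your reading of the equality case is also on target: equality in the dropped off-diagonals plus equality in the $(n-q)$-block Cauchy--Schwarz yields the block form the paper records, and the presence of at least one singleton block (guaranteed by $q<\tfrac{n}{2}$) is indeed what collapses the minimal/pseudo-umbilical alternatives from Theorem~\ref{thm.1} and forces $h\equiv 0$; the paper does not spell this last step out either, deferring to \cite{BYC3}.
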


\begin{proof}
Given a point $p$ in $M$ and a positive integer $q<\frac{n}{2}$ , let
$\pi_1,\ldots,\pi_q$ be $q$ mutually orthogonal plane sections of $M$
at $p$. We choose an orthonormal basis of $T_{p}M$ such that
\[
\pi_1=\textmd{Span}\{e_1,e_2\},\ldots,
\pi_q=\textmd{Span}\{e_{2q-1},e_{2q}\}.
\]
Then from the definition of $\delta_q^{\textrm{Ric}}$ we have
\begin{align}\label{eq.4.2}
n\delta_q^{\textrm{Ric}}(p)&=nRic(X)-2[K(\pi_1)+\cdots+K(\pi_q)]\notag\\
&=2q\textrm{Ric}(X)-2[K(\pi_1)+\cdots+K(\pi_q)]+(n-2q)\textrm{Ric}(X).
\end{align}
For convenience, we set
\[
\textrm{I} = 2q\textrm{Ric}(X)-2[K(\pi_1)+\cdots+K(\pi_q)], \qquad
\textrm{II}=(n-2q)\textrm{Ric}(X).
\]
Now we compute $\textrm{I}$ and $\textrm{II}$ one by one. First, we
rewrite $\textrm{I}$ as
\[
\textrm{I} = \sum_{l=1}^q[\textrm{Ric}(e_{2l-1},e_{2l-1}) +
\textrm{Ric}(e_{2l},e_{2l})-2K(\pi_{l})],
\]
which together with the equation of Gauss gives
\begin{align}\label{eq.4.3}
\textrm{I}&\leq 2q(n-2)c+\sum_{r}\big[(\sum_{j\neq1}h_{11}^rh_{jj}^r+\sum_{j\neq2}h_{22}^rh_{jj}^r+\cdots+\sum_{j\neq2q}h_{2q,2q}^rh_{jj}^r)\notag\\
&\quad -2(h_{11}^rh_{22}^r+h_{33}^rh_{44}^r+\cdots+h_{2q-1,2q-1}^rh_{2q,2q}^r)\big]\notag\\
&=2q(n-2)c+\sum_{r}\big[\sum_{1\leq i\leq 2q, \ 2q+1\leq j\leq n}h_{ii}^rh_{jj}^r+2\sum_{1\leq i<j\leq 2q}h_{ii}^rh_{jj}^r\notag\\
&\quad -2(h_{11}^rh_{22}^r+h_{33}^rh_{44}^r+\cdots+h_{2q-1,2q-1}^rh_{2q,2q}^r)\big]\\
&=2q(n-2)c+\sum_{r}\Big\{\sum_{1\leq i\leq 2q, \ 2q+1\leq j\leq n}h_{ii}^rh_{jj}^r+(h_{11}^r+\cdots+h_{2q,2q}^r)^2\notag\\
&\quad -[(h_{11}^r+h_{22}^r)^2+\cdots+(h_{2q-1,2q-1}^r+h_{2q,2q}^r)^2]\Big\}.\notag
\end{align}
On the other hand, we can rewrite $\textrm{II}$ as
\[
\textrm{II} = \textrm{Ric}(e_{2q+1},e_{2q+1}) +
\textrm{Ric}(e_{2q+2},e_{2q+2}) + \cdots+\textrm{Ric}(e_{n},e_{n}),
\]
which together with the equation of Gauss gives
\begin{align}\label{eq.4.4}
\textrm{II}&=(n-2q)(n-1)c+\sum_r\sum_{j\neq 2q+1}[h_{2q+1,2q+1}^rh_{jj}^r-(h_{2q+1,j}^r)^2]\notag\\
&\quad +\cdots+ \sum_r\sum_{j\neq n}[h_{nn}^rh_{jj}^r-(h_{nj}^r)^2]\notag\\
&\leq (n-2q)(n-1)c+\sum_r(\sum_{j\neq 2q+1}h_{2q+1,2q+1}^rh_{jj}^r+\cdots+\sum_{j\neq n}h_{nn}^rh_{jj}^r)\\
&=(n-2q)(n-1)c+\sum_r(2\sum_{2q+1\leq i<j\leq n}h_{ii}^rh_{jj}^r+\sum_{1\leq i\leq 2q, \ 2q+1\leq j\leq n}h_{ii}^rh_{jj}^r).\notag
\end{align}
Plunging \eqref{eq.4.3} and \eqref{eq.4.4} into \eqref{eq.4.2}, we
obtain that
\begin{align}\label{eq.4.5}
n\delta_q^{\textrm{Ric}}(p)&\leq (n^2-n-2q)c+\sum_r(h_{11}^r+\cdots+h_{2q,2q}^r)^2-\sum_r[(h_{11}^r+h_{22}^r)^2\notag\\
&\quad +\cdots+(h_{2q-1,2q-1}^r+h_{2q,2q}^r)^2]+2\sum_{r}\sum_{2q+1\leq i<j\leq n}h_{ii}^rh_{jj}^r\notag\\
&\quad+2\sum_{r}\sum_{1\leq i\leq 2q, \ 2q+1\leq j\leq n}h_{ii}^rh_{jj}^r\notag\\
&=(n^2-n-2q)c+\sum_r(h_{11}^r+\cdots+h_{2q,2q}^r)^2-\sum_r[(h_{11}^r+h_{22}^r)^2\notag\\
&\quad +\cdots+(h_{2q-1,2q-1}^r+h_{2q,2q}^r)^2]+2\sum_{r}\sum_{2q+1\leq i<j\leq n}h_{ii}^rh_{jj}^r\\
&\quad+[n^2H^2-\sum_{r}(h_{11}^r+\cdots+h_{2q,2q}^r)^2-\sum_{r}(h_{2q+1,2q+1}^r+\cdots+h_{nn}^r)^2]\notag\\
&=(n^2-n-2q)c+n^2H^2-\sum_{r}\big[(h_{11}^r+h_{22}^r)^2+(h_{33}^r+h_{44}^r)^2\notag\\
&\quad+\cdots+(h_{2q-1,2q-1}^r+h_{2q,2q}^r)^2+(h_{2q+1,2q+1}^r)^2+\cdots+(h_{nn}^r)^2\big].\notag
\end{align}
From the Cauchy inequality, we know that
\begin{align} \label{eq.4.6}
&(h_{11}^r+h_{22}^r)^2+\cdots+(h_{2q-1,2q-1}^r+h_{2q,2q}^r)^2+(h_{2q+1,2q+1}^r)^2+\cdots+(h_{nn}^r)^2\notag\\
&\quad\quad\quad\geq \frac{1}{n-q}(h_{11}^r+h_{22}^r+\cdots+h_{nn}^r)^2,
\end{align}
with the equality case of \eqref{eq.4.6} holds if and only if
\[
h_{11}^r+h_{22}^r=\cdots=h_{2q-1,2q-1}^r+h_{2q,2q}^r=h_{2q+1,2q+1}^r=\cdots=h_{nn}^r.
\]
Then we plunge \eqref{eq.4.6} into \eqref{eq.4.5}, namely,
\begin{align}
n\delta_q^{\textrm{Ric}}(p)&\leq (n^2-n-2q)c+n^2H^2-\frac{1}{n-q}\sum_{r}(h_{11}^r+h_{22}^r+\cdots+h_{nn}^r)^2\notag\\
&=(n^2-n-2q)c+\frac{n^2(n-q-1)}{n-q}H^2,\notag
\end{align}
which means
\[
\delta_q^{\textrm{Ric}}\leq (n-1-\frac{2q}{n})c+\frac{n(n-q-1)}{n-q}H^2.
\]
Next, we will discuss the equality case. The equality case of
\eqref{eq.4.1} at a point $p\in M$ holds if and only if we have the
equality in \eqref{eq.4.3}, \eqref{eq.4.4} and \eqref{eq.4.6}, i.e.
with respect to suitable orthonormal frames, the shape operators take
the following form:
\[
A_{r}=\left(
  \begin{array}{ccccccc}
   A_1^r & \cdots & 0 & 0 \\
   \vdots&   \ddots & \vdots & \vdots\\
    0 & \cdots &  A_k^r & 0\\
    0 & \cdots &  0& \mu_rE\\
  \end{array}
\right), \quad r=n+1,\cdots,2m,
\]
where $E$ is the $(n-2q)\times(n-2q)$ identity matrix and $A_i^r$,
$i=1,\ldots,k$, are symmetric $2\times 2$ submatrices satisfying
\[
\verb"trace"(A_1^r)=\cdots=\verb"trace"(A_k^r)=\mu_r.
\]
The rest of the discussion is similar to that of the proof of
Theorem~2 in \cite{BYC3}.
\end{proof}

\vskip 1 true cm

\section{Immediate applications}

From Theorems \ref{thm.1} and \ref{thm.2} we obtain immediately the
following.

\begin{cor}
If a Riemannian $n$-manifold $M$ admits a totally real isometric
immersion into a complex Euclidean space which satisfies
$$\delta_q^{{\rm Ric}}>\frac{n(n-q-1)}{n-q}H^2,$$
for some positive integer $q\leq \frac{n}{2}$ at some point, then $M$
is not an Einstein manifold.
\end{cor}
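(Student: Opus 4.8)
The plan is to argue by contraposition, reducing everything to the two inequalities already established. A complex Euclidean space is precisely a complex space form $N^m(4c)$ with $c=0$, so for a totally real isometric immersion into it I would simply specialise Theorems \ref{thm.1} and \ref{thm.2} to the value $c=0$. Accordingly, I would begin by assuming, toward a contradiction, that $M$ is an Einstein manifold, and then split the argument according to the size of the admissible integer $q\leq\frac{n}{2}$.

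If $q<\frac{n}{2}$, then Theorem \ref{thm.2} applies directly; setting $c=0$ in \eqref{eq.4.1} gives
\[
\delta_q^{\rm Ric}\leq\frac{n(n-q-1)}{n-q}H^2.
\]
If instead $q=\frac{n}{2}$, then $n=2q$ is automatically even (since $q$ is an integer), so writing $n=2k$ with $q=k$ puts us in the hypotheses of Theorem \ref{thm.1}; setting $c=0$ in \eqref{eq.3.1} yields $\delta_k^{\rm Ric}\leq2(k-1)H^2$. The only point worth verifying is that these two bounds agree on the overlap, so that a single uniform conclusion covers every admissible $q$. Substituting $n=2k$ and $q=k$ into $\frac{n(n-q-1)}{n-q}H^2$ produces $\frac{2k(k-1)}{k}H^2=2(k-1)H^2$, which is exactly the bound supplied by Theorem \ref{thm.1}. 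Hence, whether $q<\frac{n}{2}$ or $q=\frac{n}{2}$, the Einstein assumption forces $\delta_q^{\rm Ric}\leq\frac{n(n-q-1)}{n-q}H^2$ at every point of $M$.

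This conclusion contradicts the standing hypothesis that $\delta_q^{\rm Ric}>\frac{n(n-q-1)}{n-q}H^2$ at some point, and the contradiction establishes that $M$ cannot be Einstein. There is essentially no analytic obstacle in this argument, since its entire substance is already contained in Theorems \ref{thm.1} and \ref{thm.2}; the only thing that requires any care is the case distinction on $q$, together with the elementary check that the two resulting bounds coincide at the boundary value $q=\frac{n}{2}$, which is what allows the hypothesis of the corollary to be phrased with the single expression $\frac{n(n-q-1)}{n-q}H^2$ for all $q\leq\frac{n}{2}$.
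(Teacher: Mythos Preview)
Your argument is correct and matches the paper's own treatment: the paper gives no proof beyond the sentence ``From Theorems \ref{thm.1} and \ref{thm.2} we obtain immediately the following,'' and your contrapositive argument with the case split $q<\frac{n}{2}$ versus $q=\frac{n}{2}$ (together with the check that the two bounds agree at $q=\frac{n}{2}$) is exactly the intended unpacking of that sentence. One tiny caveat you might note for completeness: Theorem~\ref{thm.1} is stated only for $k\geq 2$, so the boundary case $n=2$, $q=1$ is not literally covered by it; but there the right-hand side $\frac{n(n-q-1)}{n-q}H^2$ vanishes while $\delta_1^{\rm Ric}=0$ identically on any surface, so the hypothesis of the corollary is vacuous in that case.
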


Theorems \ref{thm.1} and \ref{thm.2} also imply the following.

\begin{cor}
If an Einstein $n$-manifold satisfies
$$\delta_q^{{\rm Ric}}>(n-1-\frac{2q}{n})c,$$
for some positive integer $q\leq \frac{n}{2}$ at some point, then it
admits no totally real minimal isometric immersion into a complex
space form of constant holomorphic sectional curvature $4c$
regardless of codimension.
\end{cor}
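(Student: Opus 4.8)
The plan is to argue by contradiction, using nothing beyond the two inequalities already proved. Suppose the Einstein $n$-manifold $M$ did admit a totally real minimal isometric immersion into some complex space form $N^m(4c)$. Minimality means $\overrightarrow{H}=0$, so $H=0$ at every point and the mean-curvature term drops out of both \eqref{eq.3.1} and \eqref{eq.4.1}. I would then apply whichever of Theorem~\ref{thm.1} or Theorem~\ref{thm.2} matches the given integer $q\leq\frac{n}{2}$ to produce an upper bound on $\delta_q^{\mathrm{Ric}}$ that conflicts with the hypothesis.

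Concretely, two cases arise according to whether $q<\frac{n}{2}$ or $q=\frac{n}{2}$. If $q<\frac{n}{2}$, Theorem~\ref{thm.2} applies directly, and putting $H=0$ in \eqref{eq.4.1} gives
\[
\delta_q^{\mathrm{Ric}}\leq\left(n-1-\frac{2q}{n}\right)c.
\]
If instead $q=\frac{n}{2}$ (so $n=2k$ is even and $q=k$), Theorem~\ref{thm.1} applies, and putting $H=0$ in \eqref{eq.3.1} gives $\delta_k^{\mathrm{Ric}}\leq 2(k-1)c$. The step that needs checking is that this agrees with the bound in the corollary: substituting $q=k=\frac{n}{2}$ into $\left(n-1-\frac{2q}{n}\right)c$ produces $(n-1-1)c=(n-2)c=2(k-1)c$. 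Hence in both regimes one obtains the single clean estimate $\delta_q^{\mathrm{Ric}}\leq\left(n-1-\frac{2q}{n}\right)c$, valid at every point and independent of the codimension $m$.

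This contradicts the standing assumption $\delta_q^{\mathrm{Ric}}>\left(n-1-\frac{2q}{n}\right)c$ at some point, so no such minimal immersion can exist. There is essentially no analytic difficulty; the only care required is the bookkeeping across the two regimes of $q$—recognizing that the endpoint $q=\frac{n}{2}$ is governed by Theorem~\ref{thm.1} (which requires $n=2k$ with $k\geq 2$) rather than by Theorem~\ref{thm.2}, and verifying that the two bounds coincide there so that the inequality in the statement holds uniformly. The degenerate boundary $n=2$ falls outside the hypotheses of Theorem~\ref{thm.1} and, being the one-dimensional surface case, is not the intended scope of the result.
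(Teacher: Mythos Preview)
Your argument is correct and matches the paper's approach: the corollary is stated there as an immediate consequence of Theorems~\ref{thm.1} and~\ref{thm.2}, with no further proof given, and your case split on $q<\frac{n}{2}$ versus $q=\frac{n}{2}$ (with the verification that $2(k-1)c=(n-1-\tfrac{2q}{n})c$ at the endpoint) is exactly the intended reading.
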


Besides, from Theorems \ref{thm.1} and \ref{thm.2}, we can also get
Corollary 3 in \cite{BYC3}.

\vskip 1 true cm

\section{Acknowledgements}

The authors were supported in part by NSF in Anhui
(No. 1608085MA03) and NSF for Higher Education in Anhui (No. KJ2014A257). The authors would like to
thank Professor B.Y. Chen for the discussions held on this topic.

\vskip 1 true cm


\bigskip
\bigskip

\noindent {\footnotesize {\it P. Zhang} \\
{School of Mathematical Sciences, University of Science and Technology of China}\\
{Anhui 230026, P.R. China}\\
{Email: panzhang@mail.ustc.edu.cn}

\vskip 0.5 true cm

\noindent {\footnotesize {\it L. Zhang} \\
{School of Mathematics and Computer Science, Anhui Normal University}\\
{Anhui 241000, P.R. China}\\
{Email: zhliang43@163.com}

\vskip 0.5 true cm

\noindent {\footnotesize {\it M.M. Tripathi}\\
Department of Mathematics, Faculty of Science, Banaras Hindu University\\
Varanasi 221005, India\\
Email: mmtripathi66@yahoo.com

\end{document}